 \newtheorem{thm}{Theorem}[section]
 \newtheorem{lem}[thm]{Lemma}
 \theoremstyle{definition}
 \theoremstyle{remark}
 \numberwithin{equation}{section}
\newcommand{\field}[1]{\mathbb{#1}}
\newcommand{\MM}{\field{M}}
\newcommand{\SL}{\mathrm{SL}}
\newcommand{\pSL}{\mathrm{(P)SL}}
\newcommand{\Z}{\mathrm{Z}}
\begin{document}
\title[Special rank one groups]
{Special rank one groups with abelian\\ 
unipotent subgroups are perfect}

\author[Anja Steinbach]{Anja Steinbach}

\address{%
Justus-Liebig-Universit\"at Gie{\ss}en\\
Mathematisches Institut\\
Arndtstra{\ss}e 2\\
D 35392 Gie{\ss}en\\
Germany}

\email{Anja.Steinbach@math.uni-giessen.de}

\subjclass{Primary 20E42; Secondary 51E42}

\keywords{special (abstract) rank one group, Moufang set}

\date{August 31, 2011}

\begin{abstract}
We prove that special (abstract) rank one groups with abelian unipotent subgroups of size at least 4 are perfect.
\end{abstract}

\maketitle
\section{Introduction}

J. Tits \cite{Tits-Durham} defined Moufang sets in order to axiomatize the linear algebraic groups of relative rank one.
A closely related concept of so-called (abstract) rank one groups has been introduced by F. G. Timmesfeld \cite{TT-Buch}.

Here a group $X$ is an (abstract) rank one group with abelian unipotent subgroups $A$ and $B$, 
if $X = \langle A,B\rangle$ with $A$ and $B$ different abelian subgroups of $X$, and (writing $A^b = b^{-1}Ab$)
\begin{align*}
&\text{for each $1 \neq a \in A$, there is an element $1 \neq b \in B$ such that $A^b = B^a$,} \\
&\text{and vice versa.}
\end{align*}
In an (abstract) rank one group $X$ with abelian unipotent subgroups $A$ and $B$,
the element $1 \neq b \in B$ with $A^b = B^a$ is uniquely determined for each $1 \neq a \in A$ (as $A \neq B$) and denoted by $b(a)$. We say $X$ is special, if $b(a^{-1}) = b(a)^{-1}$ for all $1 \neq a  \in A$ 
(see Timmesfeld \cite[I (2.2), p. 17]{TT-Buch}).
In this note we prove 
\begin{thm}\label{Main-Result}
Let $X$ be a special (abstract) rank one group with abelian uni\-potent subgroups $A$ and $B$ 
and set $H = N_X(A) \cap N_X(B)$. Then $A = [A,H]$ provided that $|A| \geq 4$.
\end{thm}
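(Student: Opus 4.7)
For each $a \in A \setminus \{1\}$, define the $\mu$-map $\mu(a) := a \cdot b(a)^{-1} \cdot a$. The first step is to show, using specialness $b(a^{-1}) = b(a)^{-1}$ together with the commutativity of $A$, that conjugation by $\mu(a)$ interchanges $A$ and $B$. The starting point is $A^{b(a^{-1})} = B^{a^{-1}}$, which rewrites as $A^{b(a)^{-1}} = aBa^{-1}$; a short chain of conjugations, using $A^a = A$ and $B^a = A^{b(a)}$, then yields $A^{\mu(a)} = B$ and $B^{\mu(a)} = A$. Consequently $\mu(a) \notin H$, but for any $a_1, a_2 \in A \setminus \{1\}$ the product $h(a_1, a_2) := \mu(a_1) \mu(a_2)$ normalizes both $A$ and $B$ and so lies in $H$.

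The crux of the proof is to compute the conjugation action of $h(a_1, a_2)$ on $A$. Manipulations with the axioms yield the intermediate identity $a^{\mu(a)} = b(a)^{-1}$, and more generally, one can compute $a'^{\mu(a)}$ as an explicit element of $B$ for arbitrary $a' \in A$. Iterating this through a second $\mu$-map (and using the uniqueness identity $b(a^h) = b(a)^h$ for $h \in H$) gives a closed-form expression for $a'^{h(a_1, a_2)} \in A$ and hence for the commutator $[a', h(a_1, a_2)] = (a')^{-1} \cdot a'^{h(a_1, a_2)}$. Since $A$ is abelian, $a' \mapsto [a', h(a_1, a_2)]$ is a group homomorphism $A \to A$, so $[A, h(a_1, a_2)]$ is simply its image.

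The hypothesis $|A| \geq 4$ enters in the last step, where we must choose $a_1, a_2$ so that this image fills $A$. In the prototypical case $X = \pSL_2(k)$ with $A \cong (k,+)$, the element $h(a_1, a_2)$ acts on $A$ by multiplication by $(a_2 a_1^{-1})^2 \in k^*$, and $[A, h(a_1, a_2)] = A$ as soon as $(a_2 a_1^{-1})^2 \neq 1$; such a choice is possible precisely when $|k^*| \geq 3$, equivalently $|A| \geq 4$. (In $\pSL_2(\field{F}_2)$ and $\pSL_2(\field{F}_3)$, where $|A| \in \{2,3\}$, $H$ acts trivially on $A$ and the conclusion genuinely fails, so the bound is sharp.) The abstract argument mirrors this: the formula extracted in step two reveals that exactly the hypothesis $|A| \geq 4$ is needed for some $h(a_1, a_2) \in H$ to act nontrivially on $A$.

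The main obstacle I anticipate is step two --- extracting the action of $\mu(a_1)\mu(a_2)$ on $A$ in the abstract setting, without matrices to help. This requires careful bookkeeping through the twofold swap $A \leftrightarrow B$, and relies essentially on specialness and the uniqueness of $b(\cdot)$.
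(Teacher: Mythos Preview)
Your setup agrees with the paper's: the element you call $\mu(a)$ is the paper's $n(a) := a\,b(a)^{-1}a$, and the paper also begins by showing that $n(a)$ interchanges $A$ and $B$, so that products $n(a)n(a')$ lie in $H$.

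From there the arguments diverge. You propose to compute the conjugation action of $h(a_1,a_2) = \mu(a_1)\mu(a_2)$ on $A$ explicitly and then exhibit a pair with $[A, h(a_1,a_2)] = A$. The paper never computes this action. Instead it compares conjugates of $B$: from $B^{a^{-1}n(a)} = B^a$ and the abelianness of $B$ one obtains, for all nontrivial $a_1,a_2\in A$,
\[
B^{a_1 a_2\, n(a_2^{-1})\, a_2} \;=\; B^{a_2 a_1\, n(a_1^{-1})\, a_1}.
\]
Rewriting each side via the substitution $n(a_2^{-1}) = n(a_1a_2)^{-1}h$ with $h := n(a_1 a_2) n(a_2^{-1}) \in H$ (and symmetrically on the right) expresses both as $B$ conjugated by an element of $A$; since $N_A(B) = 1$, these $A$-elements coincide, which gives directly $a_1[A,H] = a_2[A,H]$ whenever $a_1 a_2 \neq 1$. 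Thus all nontrivial elements of $A$ lie in a single coset of $[A,H]$ apart from mutual inverses, and $|A| \geq 4$ forces $[A,H] = A$ by a two-line argument. The obstacle you flag in step two --- extracting the abstract action of $\mu(a_1)\mu(a_2)$ on $A$ --- is precisely what the paper sidesteps; this is its main point relative to the earlier proof of De~Medts, Segev and Tent, whose strategy your plan essentially follows.

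There is also a gap in your final step. In the $\pSL_2(k)$ model a single $h(a_1,a_2)$ with $(a_2 a_1^{-1})^2 \neq 1$ already yields $[A, h(a_1,a_2)] = A$, but only because $A$ is one-dimensional over $k$ there. In the abstract setting no single element of $H$ need act fixed-point-freely on $A \setminus \{1\}$, and ``acts nontrivially'' (your last sentence) is strictly weaker than ``$[A,h] = A$''. So even after carrying out step two you would still owe an argument that the various $[A, h(a_1,a_2)]$ together fill $A$; the $\SL_2$ heuristic does not supply one. The paper's coset equality $a_1[A,H]=a_2[A,H]$ handles this uniformly, with no case distinction and no need to control any individual $h$.
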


By \cite[I (2.10), p. 25]{TT-Buch} this yields that a special (abstract) rank one group with abelian unipotent subgroups is either quasi-simple or isomorphic to $\SL_2(2)$ or $\pSL_2(3)$, as was conjectured by Timmesfeld \cite[Remark, p.~26]{TT-Buch}. 

We remark that $X / \Z(X)$ is the projective group of a Moufang set and that is the point of view of T. De Medts, 
Y. Segev and K. Tent, who proved Timmesfeld's conjecture first. They deduced it from suitable identities in special Moufang sets $\MM(U, \tau)$, see \cite[Theorem 1.12]{DM-S-T}, without the assumption that $U$ is abelian.

In Timmesfeld's (quasi) simplicity criterion for groups generated by abstract root subgroups \cite[II (2.14)]{TT-Buch}
only (abstract) rank one groups with abelian unipotent subgroups are involved. My aim was to simplify his criterion via a short, elementary and self-contained proof of Theorem \ref{Main-Result}. The argument given below does not need a case differentiation whether $A$ is an elementary abelian 2-group or not. We show that $a_1[A,H] = a_2[A,H]$ for all $1 \neq a_1, a_2 \in A$ with $a_1a_2 \neq 1$.

\section{The proof of Theorem \ref{Main-Result}}
Let $X$ be a special (abstract) rank one group with abelian unipotent subgroups $A$ and $B$.
For each $1 \neq a \in A$, we set $n(a) := ab(a)^{-1}a$. 
Then $B^{n(a)} = A$. As $b(a)^{-1} = b(a^{-1})$, also $A^{n(a)} = B$. Thus $n(a)n(a') \in H := N_X(A) \cap N_X(B)$, for
all $1 \neq a, a' \in A$.
For $1 \neq a \in A$, we have
\begin{equation}\label{force-h}
B^{a^{-1}n(a)} = B^a
\end{equation}

\begin{lem} \label{useful-equation}
We have $B^{a_1a_2n(a_2^{-1})a_2} = 
B^{a_2a_1n(a_1^{-1})a_1}$, for all $1 \neq a_1, a_2 \in A$.
\end{lem}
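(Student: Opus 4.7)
The plan is to telescope the exponents on both sides by exploiting specialness, and then to reduce the resulting identity to the commutativity of $B$.

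First I would simplify $a_2 n(a_2^{-1}) a_2$. By definition, $n(a_2^{-1}) = a_2^{-1} b(a_2^{-1})^{-1} a_2^{-1}$, and specialness gives $b(a_2^{-1})^{-1} = b(a_2)$. Hence the outer $a_2$'s cancel the inner $a_2^{-1}$'s, yielding $a_2 n(a_2^{-1}) a_2 = b(a_2)$. Therefore $a_1 a_2 n(a_2^{-1}) a_2 = a_1 b(a_2)$ and symmetrically $a_2 a_1 n(a_1^{-1}) a_1 = a_2 b(a_1)$, so the lemma reduces to the equality
\[
B^{a_1 b(a_2)} = B^{a_2 b(a_1)}.
\]

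Next I would use the defining property $B^{a_i} = A^{b(a_i)}$ of $b(a_i)$ together with the associativity $Y^{xy} = (Y^x)^y$ to compute
\[
B^{a_1 b(a_2)} = (B^{a_1})^{b(a_2)} = (A^{b(a_1)})^{b(a_2)} = A^{b(a_1) b(a_2)},
\]
and analogously $B^{a_2 b(a_1)} = A^{b(a_2) b(a_1)}$. Since $B$ is abelian, $b(a_1)$ and $b(a_2)$ commute, so $A^{b(a_1) b(a_2)} = A^{b(a_2) b(a_1)}$ and the identity follows.

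I do not expect any real obstacle: the argument is a short manipulation once one notices the telescoping of $a_2 n(a_2^{-1}) a_2$ to $b(a_2)$. Specialness enters precisely there, since without $b(a_2^{-1}) = b(a_2)^{-1}$ this product would not collapse to an element of $B$; the abelianness of $B$ is invoked only at the very end, to commute the two $b(a_i)$'s past each other.
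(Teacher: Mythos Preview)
Your proof is correct and essentially identical to the paper's: both collapse $a_2 n(a_2^{-1}) a_2$ to $b(a_2)$ via specialness, rewrite $B^{a_i b(a_j)}$ as $A^{b(a_i)b(a_j)}$ using the defining relation $B^{a_i}=A^{b(a_i)}$, and then conclude from the commutativity of $B$. The only cosmetic difference is that the paper first writes $b(a_2^{-1})^{-1}$ and then invokes specialness, whereas you invoke it immediately.
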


\begin{proof}
By the definition of $n(a_2^{-1})$ we have $B^{a_1a_2n(a_2^{-1})a_2} = B^{a_1b(a_2^{-1})^{-1}}$.
As $X$ is special, the left hand side of the claim equals $A^{b(a_1)b(a_2)}$.
Similarly, the right hand side is $A^{b(a_2)b(a_1)}$. 
As $B$ is abelian, the claim follows.
\end{proof}

\begin{lem} \label{equation-in-A}
We have $a_1 \in a_2[A,H]$, for all $1 \neq a_1, a_2 \in A$ with $a_1a_2 \neq 1$.
\end{lem}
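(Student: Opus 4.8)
The plan is to prove the equivalent assertion $a_2^{-1}a_1\in[A,H]$. Since $H$ normalizes $A$ and $A$ is abelian, $[A,H]$ is a subgroup of $A$, so it suffices to produce $a_2^{-1}a_1$ as a single commutator $[a_0,h]=a_0^{-1}a_0^{h}$ with $a_0\in A$ and $h\in H$; any such commutator lies in $[A,H]$ by definition. (If $a_1=a_2$ the claim is trivial, so I may assume $a_1\neq a_2$.) The element of $H$ I would use is $h:=n(a_1)\,n(a_2^{-1})$. By specialness $n(a_2^{-1})=a_2^{-1}b(a_2)a_2^{-1}=n(a_2)^{-1}$, so $h$ is a product of two elements each interchanging $A$ and $B$; hence $A^{h}=A$ and $B^{h}=B$, that is $h\in H$, exactly as recorded in the paragraph preceding Lemma \ref{useful-equation}.

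Here is where the hypothesis $a_1a_2\neq1$ enters. Because $a_1a_2\neq1$, the element $b(a_1a_2)\in B$ is defined, and I set $a_0:=b(a_1a_2)^{n(a_1)}=n(a_1)^{-1}b(a_1a_2)\,n(a_1)$. Since $n(a_1)$ conjugates $B$ onto $A$, we have $a_0\in A$, so $[a_0,h]$ makes sense and lies in $[A,H]$. The crux of the argument is then the identity
\[
[a_0,h]=a_2^{-1}a_1 ,
\]
from which Lemma \ref{equation-in-A} is immediate. Note that the very availability of $a_0$ is what forces the restriction $a_1a_2\neq1$, which is why the hypothesis is needed precisely in this form.

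To verify the identity I would expand everything through $n(a)=ab(a)^{-1}a$ and the specialness relation $b(a^{-1})=b(a)^{-1}$, using repeatedly that $B$ is abelian and that $A^{b(a)}=B^{a}$. A convenient reduction, provable directly from the definitions, is that $h^{-1}c\in B$ for $c:=a_1b(a_2)b(a_1)^{-1}a_2^{-1}$: substituting $n(a_1)^{-1}a_1=a_1^{-1}b(a_1)$ and using $A^{b(a_2)}=B^{a_2}$ together with commutativity of $B$ collapses $h^{-1}c$ to a single element of $B$. At this point the two inputs already prepared in the excerpt take over: Lemma \ref{useful-equation}, in the form $B^{a_1b(a_2)}=B^{a_2b(a_1)}$, identifies the mixed products $c$ with an element of $N_X(B)$, while \eqref{force-h} applied to $a_1a_2$ (legitimate precisely because $a_1a_2\neq1$) absorbs the factor $b(a_1a_2)$ hidden inside $a_0$. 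Carrying out this bookkeeping is the one genuinely computational step and is the main obstacle: one must track how $b$ transforms under conjugation by the $n(a_i)$ and check that every $B$-contribution cancels, leaving exactly $a_2^{-1}a_1\in A$. I would organize it by first isolating the transformation rule that expresses $b(a)^{n(a')}$ as an element of $A$, and only then specializing to $a=a_1a_2$ and $a'=a_1$ to read off $a_0$ and complete the cancellation.
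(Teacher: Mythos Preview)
Your proposal is not a proof: the identity $[a_0,h]=a_2^{-1}a_1$, which you yourself call ``the main obstacle'', is never established. From ``To verify the identity I would expand\ldots'' on, everything is a plan rather than an argument, and the plan rests on a ``transformation rule that expresses $b(a)^{n(a')}$ as an element of $A$'' that you neither state nor prove. Concretely, $a_0^{\,h}=b(a_1a_2)^{\,n(a_1)h}=b(a_1a_2)^{\,n(a_1)^2 n(a_2^{-1})}$, so you must control the action of $n(a_1)^2$ on $b(a_1a_2)$. In $\SL_2$ one has $n(a_1)^2$ central and the identity checks out, but in a general special rank one group $n(a_1)^2$ is only known to lie in $H$, and nothing available so far lets you compute its effect on individual elements of $B$. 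Your side facts that $h^{-1}c\in B$ and, via Lemma~\ref{useful-equation}, $c\in N_X(B)$ are correct, but together they give only $c\in hB$, and you do not say how this yields the claimed commutator equation.

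The paper avoids all of this by picking a \emph{different} element of $H$, namely $h:=n(a_1a_2)\,n(a_2^{-1})$ (this is where $a_1a_2\neq 1$ enters), and by aiming only for the coset equality $a_1^{-1}[A,H]=a_2^{-1}[A,H]$ rather than for a specific commutator. With this $h$, \eqref{force-h} applied to $a_1a_2$ rewrites $B^{\,a_1a_2\,n(a_2^{-1})a_2}$ as $B^{\,a_2^{-1}a_1^{-1}h a_2}$; since $A$ is abelian and $B^h=B$, the exponent can be rearranged to $a_1^{-1}$ times an element of $[A,H]$. By symmetry the right-hand side of Lemma~\ref{useful-equation} becomes $B^{\,a_2^{-1}}$ times an $[A,H]$-factor, and $N_A(B)=1$ forces the $A$-parts to agree. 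No elementwise control of the $H$-action is ever needed.
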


\begin{proof}
Let $1 \neq a_1, a_2 \in A$ with $a_1a_2 \neq 1$. 
We set $h := n(a_1a_2)n(a_2^{-1}) \in H$.
By \eqref{force-h} we have $B^{a_1a_2n(a_2^{-1})a_2} = B^{a_2^{-1} a_1^{-1} h a_2}$.
As $A$ is abelian and $B^h = B$, the last term equals $B^{a_1^{-1}[a_1^{-1},h][h,a_2]}$.

Similarly, $B^{a_2a_1n(a_1^{-1})a_1} = B^{a_2^{-1}a_0}$ for some $a_0 \in [A,H]$.
We apply Lemma \ref{useful-equation}. As $N_A(B) = 1$, we obtain $a_1^{-1}[A,H] = a_2^{-1}[A,H]$. Thus the claim holds.
\end{proof}

When $[A,H] = 1$, then Lemma \ref{equation-in-A} implies that 
$A \subseteq \{1, a, a^{-1}\}$, where $1 \neq a \in A$; i.e., $|A| \leq 3$.
Thus for $|A| \geq 4$, we may choose $1 \neq a \in [A,H]$. By Lemma \ref{equation-in-A}, we obtain
$A \subseteq a[A,H] \cup \{1, a^{-1}\} \subseteq [A,H]$, as desired.

\end{document}